\documentclass[a4paper, 11pt]{amsart} 

\usepackage[letterpaper,margin=1in]{geometry}
\usepackage{etex,float}
\usepackage{amsmath,amssymb,amsthm,amsfonts,mathrsfs, mathtools}
\usepackage[frame,cmtip,arrow,matrix,line,graph,curve]{xy}
\usepackage{graphpap,color,paralist,pstricks}
\usepackage[mathscr]{eucal}
\usepackage{mathabx}
\usepackage[pdftex,colorlinks,backref=page,citecolor=blue]{hyperref}
\usepackage{tikz}
\usetikzlibrary{calc,decorations.markings}
\usepackage{epic,eepic}
\usepackage{yfonts}
\usepackage{enumitem} 
\usepackage{bbm}
\usepackage{tikz-cd}
\usepackage{aliascnt}
\usepackage{mathbbol}

\allowdisplaybreaks

\usepackage{xcolor, color, soul}

\usepackage{color}

\newtheorem{headthm}{Theorem}

\newaliascnt{headcor}{headthm}

\aliascntresetthe{headcor}

\newaliascnt{headconj}{headthm}

\aliascntresetthe{headconj}

\newaliascnt{corollary}{theorem}
\newtheorem{corollary}[corollary]{Corollary}
\aliascntresetthe{corollary}

\newaliascnt{claim}{theorem}

\aliascntresetthe{claim}

\newaliascnt{lemma}{theorem}
\newtheorem{lemma}[lemma]{Lemma}
\aliascntresetthe{lemma}

\newaliascnt{thmdfn}{theorem}

\aliascntresetthe{thmdfn}

\newaliascnt{conjecture}{theorem}

\aliascntresetthe{conjecture}

\newaliascnt{proposition}{theorem}

\aliascntresetthe{proposition}

\theoremstyle{definition}
\newaliascnt{definition}{theorem}

\aliascntresetthe{definition}

\newaliascnt{notation}{theorem}

\aliascntresetthe{notation}

\newaliascnt{example}{theorem}

\aliascntresetthe{example}

\newaliascnt{examples}{theorem}

\aliascntresetthe{examples}

\newaliascnt{remark}{theorem}

\aliascntresetthe{remark}

\newaliascnt{fact}{theorem}

\aliascntresetthe{fact}

\newaliascnt{question}{theorem}

\aliascntresetthe{question}

\newaliascnt{questions}{theorem}

\aliascntresetthe{questions}

\newaliascnt{problem}{theorem}

\aliascntresetthe{problem}

\newaliascnt{construction}{theorem}

\aliascntresetthe{construction}

\newaliascnt{setup}{theorem}

\aliascntresetthe{setup}

\newaliascnt{algorithm}{theorem}

\aliascntresetthe{algorithm}

\newaliascnt{observation}{theorem}

\aliascntresetthe{observation}

\newaliascnt{discussion}{theorem}

\aliascntresetthe{discussion}

\newaliascnt{defprop}{theorem}

\aliascntresetthe{defprop}

\def\sectionautorefname~#1\null{Section #1\null}
\def\subsectionautorefname~#1\null{\S #1\null}

\makeatletter
\def\equationautorefname~#1\null{Equation\,(#1)\null}
\makeatother

\definecolor{myorange}{RGB}{255, 160, 70}
\definecolor{darkgreen}{RGB}{0, 166, 0}

\def \height{{\operatorname{ht}}}

\def\Ass{\operatorname{Ass}}
\def\Min{\operatorname{Min}}

\def \trdeg{{\operatorname{trdeg}}}

\def \ord{{\operatorname{ord}}}

\DeclareMathOperator{\Quot}{Quot}

\DeclareMathOperator{\sell}{s\ell}
\DeclareMathOperator{\dell}{d\ell}

\def \0{\mathbf 0}

\def \f1{\mathbf{1}}

\newcommand{\kk}{\mathbb{k}}

\def\xi{x}

\def\ls{\leqslant}
\def\gs{\geqslant}

\def\fm{\mathfrak{m}}
\def\frp{\mathfrak{p}}
\def\frq{\mathfrak{q}}

\def\fq{\mathbf{q}}

\def\fq{\mathbf{q}}

\def \RR{\mathbb R}

\def \ZZ{\mathbb Z}

\def \I{\mathcal I}

\begin{document}
	
	\title[The symbolic and divisorial analytic spreads are finite]{The symbolic and divisorial analytic spreads are finite}

	\author[Jonathan Monta{\~n}o]{Jonathan Monta{\~n}o$^*$}
	\address{Jonathan Monta{\~n}o\\School of Mathematical and Statistical Sciences, Arizona State University, P.O. Box 871804, Tempe, AZ 85287-1804, USA. \emph{Email:} {\rm montano@asu.edu}}
	\thanks{$^{*}$ The  author was partially funded by NSF Grant DMS \#2401522.}

	\begin{abstract}
Let $R$ be a  ring that is essentially of finite type over a field and $I\subseteq R$ be an ideal. In this article it is shown that the minimal number of generators of the symbolic powers $I^{(n)}$ is bounded above by a polynomial in $n$. Furthermore, if  $R$ is assumed to be a domain and $\mathcal I=\{I_n\}_{n\ge 0}$ is an  $\mathbb{R}$-divisorial filtration,   then the minimal number of generators of the ideals $I_n$ is again bounded above by a polynomial in $n$. 
	\end{abstract}

		\maketitle

		\begin{center}
	{\bf Disclaimer}
\end{center}

The proofs of the main results  in this paper were generated by ChatGPT Pro.   The author wrote the manuscript, organized and presented the arguments, and carefully checked the correctness of each proof.
	
\section{Introduction}\label{s:the_intro}

Let $R$ be a Noetherian local ring 
and let $I\subseteq R$ be an ideal. The \emph{symbolic powers of $I$} are defined by
\begin{equation}\label{eq:sym_powers}
	I^{(n)}
	=
	\bigcap_{\mathfrak p\in\Min(I)}
	\bigl(I^nR_{\mathfrak p}\cap R\bigr),
\end{equation}
where $\Min(I)$ denotes the set of minimal primes of $I$. The construction of symbolic powers has generated a vast amount of research in both commutative algebra and algebraic geometry; see \cite{SymbolicPowersSurvey18,GrifoSeceleanuSurvey21} for surveys on the subject. 
Despite the extensive literature on symbolic powers, however, the following natural question surprisingly remains open: \emph{is the sequence of numbers of generators
$
\{\mu(I^{(n)})\}_{n\gs 0}
$
bounded above by a polynomial in $n$?} Recall that the analogous question for ordinary powers has a positive answer. Indeed, one has
$
\mu(I^n)=O(n^{\ell(I)-1})
$
, where $\ell(I)$ denotes the \emph{analytic spread of $I$}.

The analogous notion for symbolic powers, the
\emph{symbolic analytic spread}, is  defined as
\[
\sell(I)
=
\inf\bigl\{t\in\RR\mid \mu(I^{(n)})=O(n^{t-1})\bigr\}.
\]
One notes that $\sell(I)$ is not known to be finite in general except in special cases, including: when the symbolic Rees algebra
$
\oplus_{n\gs 0}I^{(n)}
$ 
is Noetherian, by standard methods on Hilbert functions; under mild hypotheses on $R$ and $I$ when $\dim(R/I)\ls 2$, by work of Dutta \cite{Dutta83}, and when $\dim(R/I)\ls 3$, by work of the author and Dao \cite{DaoMontano21}; and when the rings $R/I^{(n)}$ have sufficiently high depth for $n\gg 0$, by the same work of the author and Dao. Related finiteness results on $\sell(I)$  under additional hypotheses were also
obtained by Chen in 
\cite[Theorem~4.2.8 and Corollary~4.2.11]{Chen19}.

The first main theorem of this paper shows $\sell(I)<\infty$ when $R$ is essentially of finite type over a field, with no further assumptions on $I$.

\begin{headthm}\label{thm:symbolic-main}
	Let $R$ be a ring that is essentially of finite type over a field, and let $I\subseteq R$ be an ideal. Then
	$
	\sell(I)<\infty
	$.
\end{headthm}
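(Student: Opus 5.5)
The plan is to reduce the global statement to a uniform degree bound in a polynomial ring, and then count monomials. The step I cannot yet prove is a degree bound for a saturation by a growing power of one element; that is where the argument stands or falls.

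\emph{Reduction.} First reduce to the case where $R$ is a finitely generated $\kk$-algebra. A localization $W^{-1}A$ of an affine algebra $A$ satisfies $I^{(n)}=(I\cap A)^{(n)}W^{-1}A$ once one checks that minimal primes correspond. Hence $\mu$ can only drop under localization, and it suffices to bound $\mu$ over $A$. Next write $A=S/J$ with $S=\kk[x_1,\dots,x_m]$. Any homogenizing degree filtration gives the following count: if an ideal of $A$ is generated by residues of polynomials of degree at most $D$, then it is generated by at most $\binom{m+D}{m}$ elements. This is a polynomial in $D$ of degree $m$. So it suffices to prove:
\begin{quote}
there is a constant $C$ such that $I^{(n)}$ is generated by residues of polynomials of degree $\ls Cn$, for all $n$.
\end{quote}
This would give $\sell(I)\ls m+1<\infty$.

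\emph{Expressing $I^{(n)}$ as a saturation.} Choose $f\in A$ lying in every associated prime of every $I^n$ that is not minimal over $I$, and in no minimal prime of $I$. Such an $f$ exists because, by Brodmann, $\bigcup_n\Ass(A/I^n)$ is finite. Then
\[
I^{(n)}=I^n:_A f^\infty .
\]
By Swanson's linear growth of primary decompositions, there is a $k$ and primary decompositions of $I^n$ whose embedded components $Q$ all contain $(\sqrt{Q})^{kn}$. This lets me replace the saturation by a colon with a linearly growing exponent:
\[
I^{(n)}=I^n:_A f^{kn}.
\]
Lifting to $S$, and writing $\widetilde{I}$ for a lift of $I$ and $\widetilde f$ for a lift of $f$, this becomes
\[
\bigl(\widetilde{I}^{\,n}+J\bigr):_S \widetilde{f}^{\,kn}.
\]
In $S$, the lift is an ideal generated in degrees $\ls Dn$, colon-ed by a single polynomial of degree $\ls D'n$.

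\emph{Degree bound (main obstacle).} It remains to show that $\bigl(\widetilde{I}^{\,n}+J\bigr):\widetilde{f}^{\,kn}$ is generated in degree $O(n)$. Naive Gröbner or Hermann-type bounds for colons are polynomial or exponential in the input degree. Those would still give a polynomial bound on $\mu$, which is all that is required, provided the dependence on $n$ enters only through the degrees $Dn$ and $D'n$, with the number of variables fixed.

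The first thing I would try is the following. After homogenizing, realize the colon as a graded piece of $\mathrm{Tor}$ or $\mathrm{Hom}$ over the bigraded Rees algebra $S[\widetilde{I}t,\widetilde{f}^{\,k}t]$. I would then invoke the linearity of Castelnuovo--Mumford regularity in $n$ for finitely generated bigraded modules over such algebras (Kodiyalam, Cutkosky--Herzog--Trung style). If the colon is not visibly a graded piece of a finitely generated module, I would fall back on an effective-Nullstellensatz or Hermann bound for the colon at degree $O(n)$. Such a bound is polynomial in $n$ for fixed $m$, and it still yields polynomially many monomials, hence $\sell(I)<\infty$.
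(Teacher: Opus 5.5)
Your argument goes through, but only via your fallback, and you should drop the $O(n)$ target. The Rees-algebra route is not available in general. The linearity theorems of Kodiyalam and Cutkosky--Herzog--Trung need a finitely generated module over a Noetherian Rees algebra. But $\bigoplus_n I^{(n)}$ is a finitely generated module over the Rees algebra of $I$ only when the symbolic Rees algebra is Noetherian, which fails in general. A linear bound is also not needed.

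The right tool is Hermann's bound for syzygies, not an effective Nullstellensatz. Let $g,f_1,\dots,f_N\in S=\kk[x_1,\dots,x_m]$ have degree at most $d$. Then the syzygy module of $(g,f_1,\dots,f_N)$ is generated by vectors whose degree is bounded by a polynomial in $d$, of degree depending only on $m$. The growing number $N$ is harmless: you may first replace the $f_i$ by a $\kk$-basis of their span, which has at most $\binom{m+d}{m}$ elements. The first coordinates of these generators generate $(f_1,\dots,f_N):g$.

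Take $g=\widetilde f^{\,kn}$ and the $f_i$ to be generators of $\widetilde I^{\,n}+J$, so that $d=O(n)$. Then $I^{(n)}$ is generated by residues of polynomials of degree $O(n^{e})$, with $e$ depending only on $m$. Your monomial count then gives $\mu(I^{(n)})=O(n^{me})$. Your reduction to the affine case is correct, as is your use of Brodmann and Swanson to get $I^{(n)}=I^n:_Af^{kn}$.

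The paper shares your skeleton: a uniform saturating element from Brodmann and prime avoidance, a lift to a polynomial ring, a degree bound polynomial in $n$, and a monomial count. However, it never bounds the saturation exponent. It uses the Rabinowitsch trick instead: in its notation, $I^{(n)}$ is generated by the image of $K_n\cap A$, where $K_n=(\widetilde J^n+H,\,1-z\tilde a)\subseteq A[z]$ is generated in degree $O(n)$. Dub\'e's degree bound for Gr\"obner bases under an elimination order then gives generators of $K_n\cap A$ of degree $O(n^{2^{t+1}})$.

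So both proofs rest on the same principle: effective bounds that are polynomial in the degree once the number of variables is fixed. They differ in how the infinite saturation is handled. You spend Swanson's linear-growth theorem, a deeper input than Brodmann's, and then need only the classical bound for a single colon. The paper avoids Swanson at the cost of one extra variable and an elimination bound. Its elimination formulation also combines directly with its intersection lemma, which is how it treats $\RR$-divisorial filtrations.
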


We make a few remarks about \autoref{thm:symbolic-main}. First, although $R$ was initially assumed to be local, the theorem does not require this hypothesis. This is because the proof does not use this assumption, and we may define $\mu(M)$ to be the smallest cardinality of a generating set of the $R$-module $M$.  We also note that some authors define symbolic powers by taking the intersection in \autoref{eq:sym_powers} over $\Ass(I)$, the associated primes of $I$, rather than $\Min(I)$. The proof of the theorem applies equally to this alternative definition. 

Combining \autoref{thm:symbolic-main} with \cite[Proposition 3.15]{DaoMontano21}, we conclude that if in addition $R$ is a normal local domain of characteristic $p>0$, then the \emph{Frobenius complexity} of $R$ is finite. This gives an affirmative answer to the corresponding question in \cite[Question 4.11]{EnescuYao16} for normal local domains that are essentially of finite type over a field.

We now turn to divisorial filtrations. Assume that $R$ is a domain. Let $v$ be a valuation of $\Quot(R)$, the quotient field of $R$. Let  $(V_v,\fm_v,\kk_v)$ be the  valuation ring of $v$, and assume that $R\subseteq V_v$. Set
$
\frp=\fm_v\cap R
$, 
the center of $v$ on $R$. We say that $v$ is a \emph{divisorial valuation of $R$} if 
$
\trdeg_{\kk(\frp)}(\kk_v)=\height(\frp)-1.
$

A sequence of ideals $\I=\{I_n\}_{n\gs 0}$ is called a \emph{filtration} if $I_0=R$,
$
I_nI_m\subseteq I_{n+m}$ 
 for all $n,m\gs 0$, 
and
$
I_{n+1}\subseteq I_n$
 for all $n\gs 0$. 
A filtration is called \emph{$\RR$-divisorial} if there exist divisorial valuations $v_1,\ldots,v_s$ and positive real numbers $\lambda_1,\ldots,\lambda_s$ such that, for every $n\gs 0$,
\[
I_n
=
\bigcap_{i=1}^s I(v_i)_{n\lambda_i},
\quad \text{
where},\quad \text{ for }\alpha\in\RR_{\gs 0},\quad 
I(v_i)_\alpha
=
\{f\in R\mid v_i(f)\gs\alpha\}
.
\] 
Divisorial filtrations have been shown to exhibit good behavior in a variety of settings; see for example  \cite{CutkMontano2,Cutk21,Cutk25,Cutk26}. Important examples include symbolic powers of prime ideals in regular rings and the integral closure of powers of an ideal $
\{\overline{I^n}\}_{n\gs 0}$.

As in the symbolic-power setting, for an $\RR$-divisorial filtration $\I=\{I_n\}_{n\gs 0}$, we define the \emph{divisorial analytic spread} by
\[
\dell(\I)
=
\inf\bigl\{t\in\RR\mid \mu(I_n)=O(n^{t-1})\bigr\}.
\]
We note that, when $R$ is local, $\dell(\I)$ is different from the analytic spread
$\ell(\I)$ of a filtration introduced  in
\cite{cutkosky2022analyticsym}, which is defined as the dimension of
the fiber cone.

In \cite{cutkosky2023analytic} Cutkosky proved that if $R$ is   a two-dimensional normal excellent local ring and the $\lambda_i$ are rational, then   $\dell(\I)\ls 2$. 
Besides this result, we are not aware of other bounds for
$\dell(\I)$ that apply to general classes of divisorial filtrations.

The second main theorem of this paper shows $\dell(\I)<\infty$ when $R$ is a domain that is essentially of finite type over a field. 

\begin{headthm}\label{thm:div-main}
	Let $R$ be a domain that is essentially of finite type over a field, and let
	$\I=\{I_n\}_{n\gs 0}$ be an $\RR$-divisorial filtration. Then 
$
	\dell(\I)<\infty
$. 
\end{headthm}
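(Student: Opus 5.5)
Our plan is to bound $\mu(I_n)$ by a polynomial in $n$ via a length computation. The engine is \autoref{thm:symbolic-main}-style reasoning: we want a fixed ideal $J$ and a linear function $c(n)=an+b$ with $J^{c(n)}I_n\subseteq \fm' I_n$ for suitable primes, so that $\mu$ is controlled by lengths of quotients of polynomial size. Concretely, we proceed as follows.

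First, we reduce to $R$ a finitely generated $\kk$-algebra domain. Localization only decreases $\mu$ up to bounded factors, and in the essentially-finite-type case we may spread out to an affine model $A$ with $R=W^{-1}A$. Then $\mu_R(I_n)\ls \mu_A(I_n')$ for the contracted filtration $I_n'=I_n\cap A$. This contracted filtration is still $\RR$-divisorial on $A$, since each $v_i$ remains divisorial on $A$: the centers change, but the Zariski--Abhyankar condition $\trdeg=\height-1$ is preserved for essentially finite type extensions. Next, by a classical theorem (Zariski), each divisorial valuation $v_i$ is, up to scaling, $\ord_{E_i}$ for a prime divisor $E_i$ on a normal projective-over-$\operatorname{Spec}A$ model. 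Taking a common normal model $X\to \operatorname{Spec}A$ (normalized blowup) on which all $E_i$ appear, we get
\[
I_n=\Gamma\Bigl(X,\mathcal O_X\bigl(-\textstyle\sum_i\lceil n\lambda_i/c_i\rceil E_i\bigr)\Bigr)\cap A .
\]

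The key step is a uniform generation bound. We choose an ample (relative to $A$) Cartier divisor $H$ on $X$ and an effective Cartier $D\gs \sum E_i$. The sheaves $\mathcal F_n=\mathcal O_X(-\lfloor nB\rfloor)$, with $B=\sum (\lambda_i/c_i)E_i$, are rank-one reflexive sheaves whose round-downs take finitely many "fractional part" shapes only up to the integral part. The plan is to compare $\mathcal F_n$ to $\mathcal O_X(-mD)$ twisted by bounded data. Then we embed $X\subseteq \mathbb P^N_A$ and write $I_n=\Gamma(X,\mathcal F_n)$ as an $A$-module of global sections of a coherent sheaf. Its generators are bounded by $\dim_{\kk}$ of a fiber-type Hilbert function, which we control via Castelnuovo--Mumford regularity bounds that are linear in $n$: $\mathcal F_n\otimes\mathcal O(kH)$ is globally generated, with vanishing higher cohomology, for $k\gs an+b$ (Fujita vanishing applied to the divisors $-\lfloor nB\rfloor + kH$, using that $\lfloor nB\rfloor$ is bounded by $n\lceil B\rceil$). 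Then $\mu_A(I_n)$ is at most the number of generators of a graded module over $A[x_0,\dots,x_N]$ in degrees $\ls an+b$. To make this finite we must cut down to a field. To that end we apply Noether normalization $\kk[y_1,\dots,y_d]\subseteq A$, under which $I_n$ becomes a finite module over a polynomial ring. Its generators are then bounded by the rank times the number of monomials of degree $\ls$ (a linear bound on generator degrees), which is $O(n^{d})$, provided we have a linear bound on the degrees of generators of $I_n$ under a chosen grading or filtration of $A$.

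The main obstacle is this last point: we need a linear-in-$n$ bound on generator degrees of $I_n$ (equivalently, regularity of the $\mathcal F_n$ over the non-projective base). We would first try to compactify: take a projective closure $\overline X$ of $X$ over $\kk$ and extend the $E_i$ and the boundary. We then express $I_n$ as a filtered union of $\Gamma(\overline X,\mathcal O(-\lfloor nB\rfloor + kL))$ over $k$, where $L$ is the boundary divisor. By Fujita vanishing, these spaces are generated in $L$-degree $\ls an+b$, and the dimensions $h^0$ grow like $O((n+k)^{\dim})$. This yields $\mu(I_n)=O(n^{\dim R})$, hence $\dell(\I)\ls \dim R+1<\infty$.
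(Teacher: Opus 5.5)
There is a genuine gap, and it is the step you flag as the main obstacle: the claim that, ``by Fujita vanishing'', $I_n$ is generated in $L$-degree $\ls an+b$. Fujita vanishing concerns a \emph{fixed} coherent sheaf twisted by $mH+P$ with $H$ ample and $P$ nef, and neither ingredient is available here.

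\textbf{Positivity fails.} A Cartier divisor $L$ supported on $\overline X\setminus X$ is numerically trivial on the complete curves of $X$, i.e.\ those contracted by $\varphi\colon X\to\operatorname{Spec}A$. So $L$ is not ample once $\varphi$ has positive-dimensional fibres. For every $k$, $-\lceil nB\rceil+kL$ has the same degree as $-\lceil nB\rceil$ on those curves. Since $-B$ is in general not $\varphi$-nef, no multiple of $L$ makes the twist nef.

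Put differently, $A$ acts through an ample divisor on a compactification of $\operatorname{Spec}A$. What you actually need is a linear regularity bound for the pushforwards $\varphi_*\mathcal O(-\lceil nB\rceil)$. These are governed by the higher direct images $R^j\varphi_*\mathcal O(-\lceil nB\rceil)$, which are under control when $-B$ is $\varphi$-ample (as for $\{\overline{I^n}\}$) but not in general. If some model made $-B$ relatively ample and $\mathbb Q$-Cartier, then for rational $\lambda_i$ and normal $A$ the algebra $\bigoplus_n I_n$ would be Noetherian. That is the case where finiteness is classical. Divisorial filtrations with non-Noetherian Rees algebras exist already in $\kk[x,y,z]$, e.g.\ symbolic powers of certain space monomial curves.

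\textbf{Smaller problems.}
\begin{itemize}
\item The $E_i$ need not be $\mathbb Q$-Cartier on a normalized blowup, and resolution of singularities is not available over an arbitrary field. So the sheaves $\mathcal O_X(-\lceil nB\rceil)$ are not a finite list of fixed sheaves tensored with line bundles.
\item The inequality $\lfloor nB\rfloor\ls n\lceil B\rceil$ only gives an inclusion of sheaves, which transfers neither vanishing nor generation. Also, the correct rounding is $\lceil nB\rceil$, not $\lfloor nB\rfloor$.
\item For non-normal $A$, a bound for $\Gamma(X,\mathcal F_n)\subseteq\overline A$ (the normalization) does not directly bound $I_n=\Gamma(X,\mathcal F_n)\cap A$.
\end{itemize}
Finally, your conclusion $\dell(\I)\ls\dim R+1$ is an effective bound of the kind the paper leaves open. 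It would give $\sell(\frp)\ls\dim R+1$ for every prime $\frp$ of a regular affine domain. This signals that a linear degree bound is not a routine consequence of vanishing theorems.

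\textbf{What is missing, and how the paper supplies it.} You need a degree bound that uses no positivity at all, and it only has to be polynomial in $n$, not linear. Your reduction to an affine model is fine, and so is your final monomial count, which is the second half of \autoref{lem:effective-elimination}. The paper obtains the degree bound algebraically:
\begin{itemize}
\item Each $V_i$ equals $(B_i)_{\frp_i}$ for a finitely generated $R_0$-algebra $B_i$ (\autoref{lem:affine-dvr-model}), so the valuation ideals are $R_0\cap\frp_i^{(m)}$.
\item One has $\frp_i^{(m)}=\frp_i^m:h_i^\infty$ for a single element $h_i$ (\autoref{lem:uniform-saturation}).
\item The saturations and the intersection over $i$ are packaged into one elimination problem (\autoref{lem:saturation-elimination} and \autoref{lem:intersection-elimination}) for an ideal $D_n$ generated in degree $O(n)$.
\item Dub\'e's Gr\"obner basis bound then shows that $D_n\cap A=\pi^{-1}(J_n)$ is generated in degree polynomial in $n$.
\end{itemize}
This route is insensitive to irrational $\lambda_i$, non-$\mathbb Q$-Cartier divisors, non-normality (the contraction to $R_0$ is built into $\pi^{-1}(J_n)$) and non-Noetherian Rees algebras. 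The price is a doubly exponential exponent instead of your $\dim R+1$.
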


We conclude by noting that the existence of a polynomial bound for the number of generators does not simply follow by the definition of filtration. This can be seen from \cite[Example 1.1]{Cutk25}, which was constructed by the author in joint work with Dao and Cutkosky.

As a natural follow-up problem, one may want to find effective bounds on $\sell(I)$ and $\dell(\I)$. For example, does $\sell(I)\ls \dim(R)$ always hold? See \cite[Conjecture 5.1]{DaoMontano21}. It would also be interesting to extend the finiteness results to more general Noetherian rings.

	
		\section{Preliminary lemmas}	
		
		In this section we include some preliminary lemmas that are used in the proofs of the two main results. 
		
		We begin with the following lemma that helps us express symbolic powers as saturations by a single element. The special cases corresponding to 
		$\{\frp_1,\dots,\frp_r\}=\Min(I)$ and
		$\{\frp_1,\dots,\frp_r\}=\Ass(I)$ are  stated in
		\cite[Lemmas~2.1 and~2.2]{HJKN2023}. Related saturation descriptions
		appear in \cite[Proposition~3.13]{Eisenbud}, 
		\cite[Section~3]{herzog2007}, and \cite[Propositions~2.7.30-2.7.33]{Chen19}.

		\begin{lemma}\label{lem:uniform-saturation}
			Let $R$ be a Noetherian ring, and let $I\subseteq R$ be an ideal. Let
			$\{\frp_1,\dots,\frp_r\}$ 
			be a fixed finite set of prime ideals of $R$. 
			Set $W=R\setminus\bigcup_{j=1}^r\frp_j$. Then there exists an element $h\in W$ such that for every $n\gs 1$ 
			\[
		\bigcap_{j=1}^r\bigl(I^nR_{\frp_j}\cap R\bigr)=
			I^n:h^\infty.
			\]
		\end{lemma}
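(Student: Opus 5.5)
Fix $h$ using the associated primes of all the powers $I^n$, via Brodmann's theorem: the set $A=\bigcup_{n\gs 1}\Ass(R/I^n)$ is finite, since $\Ass(R/I^n)$ is constant for $n\gg 0$. Let $A'\subseteq A$ be the primes $\frq\in A$ with $\frq\not\subseteq\frp_j$ for every $j$. For each $\frq\in A'$ we have $\frq\not\subseteq\bigcup_j\frp_j$ by prime avoidance, so we can pick $h_\frq\in\frq\cap W$. Put $h=\prod_{\frq\in A'}h_\frq$ (and $h=1$ if $A'$ is empty). Since $W$ is multiplicatively closed, $h\in W$. Note that $h$ does not depend on $n$; that uniformity is the whole point of the statement.

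Now fix $n$ and take a primary decomposition $I^n=\bigcap_k Q_k$ with $Q_k$ being $\frq_k$-primary and $\frq_k\in\Ass(R/I^n)\subseteq A$. The left-hand side is $I^nW^{-1}R\cap R$. Indeed, $I^nR_{\frp_j}\cap R$ is the set of $x$ with $sx\in I^n$ for some $s\notin\frp_j$. The intersection over $j$ equals the contraction from $W^{-1}R$, because the maximal ideals of $W^{-1}R$ are among the $\frp_jW^{-1}R$, and an element lies in an ideal of a ring iff it does so locally at every maximal ideal. By the standard behavior of primary decompositions under localization, $I^nW^{-1}R\cap R=\bigcap_{\frq_k\cap W=\emptyset}Q_k$. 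The condition $\frq_k\cap W=\emptyset$ means $\frq_k\subseteq\bigcup_j\frp_j$, i.e.\ $\frq_k\subseteq\frp_j$ for some $j$. Similarly, $I^n:h^\infty=\bigcap_{h\notin\frq_k}Q_k$, since $Q_k:h^\infty$ equals $R$ if $h\in\frq_k$ and equals $Q_k$ otherwise.

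It remains to check that for $\frq_k\in\Ass(R/I^n)$, $h\notin\frq_k$ iff $\frq_k\subseteq\frp_j$ for some $j$. If $\frq_k\subseteq\frp_j$, then $h\in W$ gives $h\notin\frp_j\supseteq\frq_k$. If $\frq_k\not\subseteq\frp_j$ for all $j$, then $\frq_k\in A'$, so $h_{\frq_k}\in\frq_k$ and hence $h\in\frq_k$. So the two intersections agree for every $n$. The only nontrivial input is the finiteness of $A$, i.e.\ Brodmann's theorem, which is exactly what makes $h$ independent of $n$. Everything else is routine bookkeeping with primary decompositions, including the contraction identity through $W^{-1}R$, which is the step to write carefully.
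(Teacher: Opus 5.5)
Your proof is correct and follows the paper's argument essentially step for step. You build the same $h$ from Brodmann's finiteness of $\bigcup_n\Ass(R/I^n)$ and prime avoidance, then compare $I^n:h^\infty$ with the left-hand side component by component in a primary decomposition of $I^n$ whose radicals are associated primes. Your explicit identification of the left-hand side with $I^nW^{-1}R\cap R=\bigcap_{\frq_k\cap W=\varnothing}Q_k$ only spells out a step the paper leaves implicit.
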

		
		\begin{proof}
		By Brodmann's theorem \cite{Brodmann79}, the set
			$
			\Ass^\infty(I) =\bigcup_{n\ge 1}\Ass(I^n)
		   $
			is finite. Define
			\[
			\mathcal A=\{\frp\in\Ass^\infty(I)\mid \frp\cap W\ne\varnothing\}.
			\]
			For each $\frp\in\mathcal A$, choose $h_\frp\in \frp\cap W$, and let
		    $
			h=\prod_{\frp\in\mathcal A}h_\frp,
			$
			with the convention $h=1$ if $\mathcal A=\varnothing$. Since $W$ is multiplicatively closed, for every $\frp\in \Ass^\infty(I)$ one has 
			$
			h\in \frp$ if and only if $\frp\in \mathcal A$.
			
			Fix $n$ and choose a minimal primary decomposition
			$
			I^n=\bigcap_{j=1}^t \fq_j$. Therefore, 
			\[
			I^n:h^\infty = \bigcap_{j=1}^t \bigl(\fq_j:h^\infty\bigr) = \bigcap_{j=1}^t \tilde{\fq_j},
			\]
			where $\tilde{\fq_j} = \fq_j$ if $h\not\in \sqrt{\fq_j}$, and $\tilde{\fq_j}=R$ otherwise. 
			By the last observation in the previous paragraph we also have $h\not\in \sqrt{\fq_j}$ if and only if $\sqrt{\fq_j}\cap W =\varnothing$, which is equivalent to $\sqrt{\fq_j}\subset \bigcup_{i=1}^r\frp_i$. 
			 By prime avoidance the latter holds if and only if $\sqrt{\fq_j}\subset \frp_i$ for some  $i$, and hence the conclusion follows. 
		\end{proof}
		
		\begin{corollary}\label{cor:symbolic-saturation}
			For every ideal $I$ in a Noetherian ring $R$, there exists $h\in R$ such that
			\[
			I^{(n)}=I^n:h^\infty
			\]
			for every $n\ge 1$.
		\end{corollary}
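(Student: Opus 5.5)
This is a direct specialization of \autoref{lem:uniform-saturation}. We take the finite set of primes $\{\frp_1,\dots,\frp_r\}$ to be $\Min(I)$. This set is finite because $R$ is Noetherian: $I$ has a primary decomposition, and the minimal primes of $I$ are among its finitely many associated primes.

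If $I=R$, then $\Min(I)=\varnothing$. Under the convention that the empty intersection is $R$, we have $I^{(n)}=R=I^n:h^\infty$ for any $h$, for instance $h=1$. So we may assume $I$ is proper, in which case $\Min(I)$ is a nonempty finite set.

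With $W=R\setminus\bigcup_{\frp\in\Min(I)}\frp$, \autoref{lem:uniform-saturation} supplies an element $h\in W$ such that for every $n\gs 1$
\[
\bigcap_{\frp\in\Min(I)}\bigl(I^nR_{\frp}\cap R\bigr)=I^n:h^\infty .
\]
The left-hand side is exactly the definition \autoref{eq:sym_powers} of $I^{(n)}$, which gives the corollary.

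There is no real obstacle here; the substantive work, namely the uniform choice of $h$ via Brodmann's finiteness of $\Ass^\infty(I)$, was already done in the lemma. The only points needing care are the finiteness of $\Min(I)$ and the degenerate case $I=R$. The same argument applied with $\Ass(I)$ in place of $\Min(I)$ covers the alternative definition of symbolic powers mentioned in the introduction.
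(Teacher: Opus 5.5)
Your proposal is correct and matches the paper's proof, which simply applies \autoref{lem:uniform-saturation} with the finite set of primes taken to be $\Min(I)$, or $\Ass(I)$ for the alternative definition. Your remarks on why $\Min(I)$ is finite and on the degenerate case $I=R$ are harmless additions that the paper leaves implicit.
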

		
		\begin{proof}
			Apply \autoref{lem:uniform-saturation} with either $\{\frp_1,\dots,\frp_r\} = \Ass(I)$, or $\{\frp_1,\dots,\frp_r\} = \Min(I)$, depending on the chosen definition of symbolic powers. 
		\end{proof}
		
		The following lemma realizes saturation by a single element as an ideal obtained by elimination from a polynomial extension. This construction is commonly known as the ``Rabinowitsch trick''.
		
		\begin{lemma}\label{lem:saturation-elimination}
			Let $R$ be a commutative ring, $I\subseteq R$ an ideal, and $h\in R$. If $z$ is a variable, then
			\[
			I:h^\infty=\bigl(IR[z] + (1-zh)\bigr)\cap R.
			\]
		\end{lemma}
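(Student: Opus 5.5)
The plan is to prove the two inclusions separately. The only tools needed are the identity $1=zh+(1-zh)$ and the substitution $z\mapsto 1/h$ into the localization $R_h$.

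For the inclusion $I:h^\infty\subseteq \bigl(IR[z]+(1-zh)\bigr)\cap R$, take $f\in R$ with $h^Nf\in I$ for some $N\gs 0$. Since $zh\equiv 1$ modulo $(1-zh)$, we get $z^Nh^N\equiv 1$ modulo $(1-zh)$. Explicitly, $1-z^Nh^N=(1-zh)(1+zh+\cdots+(zh)^{N-1})$. Therefore
\[
f=z^N(h^Nf)+f\bigl(1-z^Nh^N\bigr)\in IR[z]+(1-zh).
\]
Since also $f\in R$, this inclusion follows.

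For the reverse inclusion, take $f\in R$ and write $f=\sum_i a_i g_i(z)+(1-zh)q(z)$ with $a_i\in I$ and $g_i,q\in R[z]$. Consider the $R$-algebra homomorphism $\varphi\colon R[z]\to R_h$ sending $z\mapsto 1/h$; this kills $1-zh$. Applying $\varphi$ gives $f/1=\sum_i a_i g_i(1/h)$ in $R_h$. Clearing denominators, let $d$ bound the degrees of the $g_i$. Then $h^d f/1=\sum_i a_i\, h^d g_i(1/h)$, and each $h^dg_i(1/h)$ is the image of an element of $R$. So $h^df/1=b/1$ with $b\in I$.

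By the definition of equality in $R_h$, there is $m\gs 0$ with $h^m(h^df-b)=0$ in $R$. Hence $h^{m+d}f=h^mb\in I$, so $f\in I:h^\infty$.

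The only point needing care is this last step: $R_h$ may fail to be a subring of anything nice when $h$ is a zero-divisor or nilpotent. If $h$ is nilpotent, then $R_h=0$, and the argument still works, since then $h^k f=0\in I$ for large $k$. Passing through the explicit equality in the localization, rather than treating $1/h$ as an honest element of $R$, handles all cases uniformly. No Noetherian hypothesis is required.
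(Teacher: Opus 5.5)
Your proof is correct and takes essentially the same approach as the paper, which identifies $R[z]/\bigl(IR[z]+(1-zh)\bigr)$ with $(R/I)_h$ and reads off the kernel of $R\to(R/I)_h$ as $I:h^\infty$. Your two inclusions unpack that isomorphism elementwise: the forward one via $1-z^Nh^N=(1-zh)(1+zh+\cdots+(zh)^{N-1})$, the reverse one via the substitution $z\mapsto 1/h$ into $R_h$. This makes your argument self-contained, since you never need to justify the isomorphism itself.
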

		
		\begin{proof}
			The ideal $\bigl(IR[z] + (1-zh)\bigr)\cap R$ is the kernel of the natural map 
			\[
			  R\longrightarrow R[z]/\bigl(IR[z] + (1-zh)\bigr)\cong (R/I)_h.	
			  \]
			This kernel consists precisely of the elements $f\in R$ for which $h^nf\in I$ for  $n\gg 0$, namely $I:h^\infty$.
		\end{proof}
	
	We now give a lemma that allows us to express an intersection of contractions of ideals from different polynomial extensions as a single contraction
		
		\begin{lemma}\label{lem:intersection-elimination}
			Let $R$ be a commutative ring. For $1\le i\le s$, let $T_i=R[\mathbf x_i]=R[x_{i,1},\ldots, x_{i,r_i}]$ be polynomial rings in pairwise disjoint sets of variables and let $L_i\subseteq T_i$ be ideals. Let $z_1,\ldots, z_s$ be new variables and write 
			$
			T=R[\mathbf x_1,\dots,\mathbf x_s,z_1,\dots,z_s]$.  
			Define
			\[
			D=\sum_{i=1}^s z_iL_iT+\Bigl(1-\sum_{i=1}^sz_i\Bigr)T.
			\]
			Then
			\[
			D\cap R=\bigcap_{i=1}^s(L_i\cap R).
			\]
		\end{lemma}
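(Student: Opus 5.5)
We prove the two inclusions separately. The inclusion $\bigcap_i(L_i\cap R)\subseteq D\cap R$ is immediate. Let $f\in R$ lie in every $L_i$. Since $L_i\subseteq T_i\subseteq T$, we have $z_if\in z_iL_iT\subseteq D$ for each $i$. Then
\[
f=\Bigl(1-\sum_{i=1}^s z_i\Bigr)f+\sum_{i=1}^s z_if\in D,
\]
so $f\in D\cap R$.

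For the reverse inclusion, take $f\in D\cap R$ and fix an index $k$. The plan is to apply an $R$-algebra homomorphism $\varphi_k\colon T\to T_k$ that kills the contributions of the other summands. We define it by $z_k\mapsto 1$ and $z_i\mapsto 0$ for $i\ne k$. It sends $\mathbf x_k$ identically to itself, and it sends each variable of $\mathbf x_i$ with $i\ne k$ to $0$ (or to any element of $R$; the choice does not matter). Then:
\begin{itemize}
\item $\varphi_k(1-\sum_i z_i)=0$;
\item $\varphi_k(z_iL_iT)=0$ for $i\ne k$;
\item $\varphi_k(z_kL_kT)\subseteq L_k\cdot\varphi_k(T)=L_kT_k=L_k$.
\end{itemize}
The last point holds because $\varphi_k$ restricts to the identity on $T_k$, so it fixes $L_k$ elementwise. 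Hence $\varphi_k(D)\subseteq L_k$. Since $\varphi_k$ is an $R$-algebra map, $\varphi_k(f)=f$, and therefore $f\in L_k\cap R$. Letting $k$ range over $1,\dots,s$ gives $f\in\bigcap_k(L_k\cap R)$.

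The only point needing care is that $\varphi_k$ is well defined and acts as the identity on $T_k$. This is where the hypothesis that the variable sets are pairwise disjoint and disjoint from the $z_i$ is used, so that the substitution is a legitimate homomorphism out of the polynomial ring $T$. Beyond this the argument is routine, and I do not expect any real obstacle.
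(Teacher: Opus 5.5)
Your proof is correct and follows the paper's argument. The forward inclusion is identical, and for the converse the paper uses the same specialization $z_k\mapsto 1$, $z_i\mapsto 0$ ($i\neq k$). The paper, however, keeps all the $\mathbf x$-variables, so it lands in $L_kR[\mathbf x_1,\dots,\mathbf x_s]$ and then invokes faithful flatness to contract back to $L_k$. Your extra substitution of the other $\mathbf x_i$-variables by $0$ gives a retraction $T\to T_k$, which makes that step immediate.
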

		
		\begin{proof}
			Suppose $f\in\bigcap_i(L_i\cap R)$. Then
			\[
			f=\sum_{i=1}^sz_if+\Bigl(1-\sum_{i=1}^sz_i\Bigr)f\in D. 
			\]
			Therefore,  $f\in D\cap R$.
			
			Conversely, suppose $f\in D\cap R$. Fix $i$ and consider the ideal $J_i=(z_1,\ldots, z_{i-1},z_i-1,z_{i+1}, \ldots, z_s)$. The quotient map  $T\to T/J_i\cong R[\mathbf x_1,\dots,\mathbf x_s]$ maps $D$ onto the extended ideal $L_iR[\mathbf x_1,\dots,\mathbf x_s]$. Hence $f$ belongs to the latter ideal. Since polynomial extensions are faithfully flat, the  contraction back to $T_i$ gives $f\in L_i$.  Thus, $f\in L_i\cap R$. As this holds for every $i$, we obtain $f\in\bigcap_i(L_i\cap R)$, finishing the proof.
		\end{proof}

The following lemma shows that a polynomial bound on the degrees of generators gives a polynomial bound after elimination. As a consequence, the number of generators of the resulting elimination ideals is also polynomially bounded.
	
	\begin{lemma}\label{lem:effective-elimination}
		Let 
		\[
		A=\kk[x_1,\dots,x_t]\subseteq T=A[y_1,\dots,y_r]
		\]
		be  polynomial rings over a field $\kk$. Let $\{L_n\}_{n\gs 1}$ be a sequence of ideals $L_n\subseteq T$ and suppose that  $L_n$ has a generating set consisting of polynomials of total degree at most
		$
		d_n\le an^b
		$
		for some constants $a,b\in \ZZ_{>0}$. Then there is a constant $c\in\mathbb Z_{>0}$ such that $L_n\cap A$ is generated by polynomials of total  degree at most
		$
		cn^{b2^{t+r}}
		$ 
		for every $n$. Consequently, 
		\[
		\mu(L_n\cap A) = O(n^{tb2^{t+r}}).
		\]
	\end{lemma}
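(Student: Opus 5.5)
The plan is to use effective Gröbner basis degree bounds for elimination orders. The key input is the classical doubly exponential bound of Dubé (refining Mayr--Meyer and Möller--Mora). If an ideal in a polynomial ring in $N$ variables over a field is generated by polynomials of degree at most $d$, then its reduced Gröbner basis with respect to \emph{any} monomial order consists of polynomials of degree at most
\[
2\Bigl(\tfrac{d^2}{2}+d\Bigr)^{2^{N-1}} \ls C_N\, d^{2^N}
\]
for a constant $C_N$ depending only on $N$. Here $N=t+r$. We apply this to $L_n\subseteq T$ with an elimination (block) order in which every $y_j$ is larger than every monomial in the $x_i$ alone, for instance the lexicographic product order with the $y$-block first.

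By the elimination theorem, $G_n\cap A$ is a Gröbner basis of $L_n\cap A$, where $G_n$ is the reduced Gröbner basis of $L_n$ for this order. In particular $G_n\cap A$ generates $L_n\cap A$. Every element of $G_n$ has degree at most $C_N d_n^{2^{N}}$. Substituting $d_n\ls an^b$ gives degrees bounded by
\[
C_N a^{2^{t+r}} n^{b2^{t+r}},
\]
so we may take $c=C_N a^{2^{t+r}}$, rounded up to an integer. This gives the first assertion. One small point to note is that $d_n\gs 1$ may be assumed by enlarging $a$, so the bound is monotone in $d$. The main obstacle is only to cite the degree bound in a form valid for an arbitrary monomial order and over an arbitrary field. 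Dubé's theorem is stated in exactly this generality, since it is characteristic-free and order-independent, so I would cite it directly.

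For the count of generators, set $D_n=cn^{b2^{t+r}}$. The ideal $L_n\cap A$ is generated by elements of degree at most $D_n$. Replace this generating set by a $\kk$-basis of the span of its elements inside the finite-dimensional space $A_{\ls D_n}$ of polynomials of degree at most $D_n$. The result is still a generating set, and its cardinality is at most
\[
\dim_\kk A_{\ls D_n}=\binom{D_n+t}{t}=O(D_n^{t})=O\bigl(n^{tb2^{t+r}}\bigr).
\]
Hence $\mu(L_n\cap A)=O(n^{tb2^{t+r}})$, as claimed.
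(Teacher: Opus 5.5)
Your proposal is correct and follows essentially the same route as the paper. Both apply Dubé's degree bound to a Gröbner basis of $L_n$ under an elimination order for the $y$-variables, then use the elimination theorem to get generators of $L_n\cap A$ of degree $O(n^{b2^{t+r}})$. Both then bound $\mu(L_n\cap A)$ by $\binom{D_n+t}{t}$, after replacing $G_n\cap A$ with a $\kk$-basis of its span.
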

	
	\begin{proof}
		 Choose an elimination order for the $y$-variables on  $T$,  with the $y$-variables larger than the $x$-variables. Dub\'e's theorem \cite{Duble90} gives a Gr\"obner basis $G_n$ of $L_n$ whose elements have total degree at most
		\[
		2\Bigl(\frac{d_n^2}{2}+d_n\Bigr)^{2^{t+r-1}}
		\le cn^{b2^{t+r}}
		\]
		for a constant $c$ independent of $n$. By the elimination theorem \cite[Ch.3, \S1]{CoxLigtOshea25}, $G_n\cap A$ is a Gr\"obner basis, hence a generating set, of $L_n\cap A$. This proves the first part of the statement.
		
		Set $e_n=cn^{b2^{t+r}}$. The vector space of polynomials in $A$ of total degree at most $e_n$ has dimension 
	    $\binom{e_n+t}{t}$. 
		By replacing $G_n\cap A$ by a $\kk$-basis of its $\kk$-linear span, we obtain a generating set of $L_n\cap A$ with at most $\binom{e_n+t}{t}$ elements. Thus, $\mu(L_n\cap A) = O(n^{tb2^{t+r}})$, as desired.
		\end{proof}
 
We end this section with the following lemma, which shows that divisorial valuation rings over domains essentially of finite type over a field are  essentially of finite type over the same field.

\begin{lemma}\label{lem:affine-dvr-model}
Let $R$ be a domain that is essentially of finite type over a field $\kk$, and let $\Quot(R)$ be its quotient field. 
		Write  
	$
	R=W^{-1}R_0$
	with  $ R_0$  
	 a domain that is finitely generated over $\kk$ and $W\subseteq R_0$ a multiplicatively closed set.  
	Let $v$ be a divisorial valuation of $R$ and $V\subset \Quot(R)$ its valuation ring. Then there exist a domain
	$
	R_0\subseteq B\subseteq \Quot(R)
	$ 
	finitely generated as an $R_0$-algebra, 
	and a height-one prime ideal $\frp\subset B$ such that
	$
	B_\frp=V
	$.
\end{lemma}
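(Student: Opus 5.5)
The plan is to use Zariski's classical description of divisorial valuations: one whose residue field has transcendence degree $\height(\frp)-1$ over $\kk(\frp)$ is the local ring of a codimension-one point on a finitely generated model. Throughout, write $K=\Quot(R)=\Quot(R_0)$ and $\frp=\fm_v\cap R$. Since $R\subseteq V$, we have $R_0\subseteq V$. Put $\frp_0=\fm_v\cap R_0$. Localization does not change heights or residue fields, so $\height(\frp_0)=\height(\frp)$ and $\kk(\frp_0)=\kk(\frp)$. Hence $\trdeg_{\kk(\frp_0)}\kk_v=\height(\frp_0)-1$.

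\textbf{Building $B$.} Set $h=\height(\frp_0)$. Choose $\bar u_1,\dots,\bar u_{h-1}\in\kk_v$ forming a transcendence basis over $\kk(\frp_0)$, and lift them to elements $u_1,\dots,u_{h-1}\in V$. Let $B=R_0[u_1,\dots,u_{h-1}]\subseteq V$; this is a domain, finitely generated over $R_0$, with $\Quot(B)=K$. Let $\frq=\fm_v\cap B$. By construction the residue field $\kk(\frq)$ contains $\kk(\frp_0)(\bar u_1,\dots,\bar u_{h-1})$, so $\trdeg_{\kk(\frp_0)}\kk(\frq)\gs h-1$.

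\textbf{Height of $\frq$.} Because $R_0$ is finitely generated over a field, it is universally catenary, so the dimension formula holds:
\[
\height(\frq)+\trdeg_{\kk(\frp_0)}\kk(\frq)=\height(\frp_0)+\trdeg_{R_0}B=h.
\]
Combined with the inequality above, this gives $\height(\frq)\ls 1$. Also $\frq\neq 0$, since $\frp_0\ne 0$ when $h\gs 1$. The case $h=0$ cannot occur, because the definition would require transcendence degree $-1$. Therefore $\height(\frq)=1$, and $B_\frq\subseteq V$ is a one-dimensional Noetherian local domain dominated by $V$.

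\textbf{Normalizing and identifying $V$.} This is the step I expect to be the main obstacle. Replace $B$ by its normalization $B'$, which is finite over $B$ because $B$ is finitely generated over a field; thus $B'$ is still finitely generated over $R_0$. Since $V$ is integrally closed, $B'\subseteq V$. Set $\frq'=\fm_v\cap B'$. It lies over $\frq$, so by going-up/incomparability for integral extensions it has height one. Then $B'_{\frq'}$ is a normal one-dimensional Noetherian local domain, hence a DVR with fraction field $K$. It is dominated by $V$, and a DVR is a maximal proper subring of its fraction field among valuation rings dominating it. Hence $B'_{\frq'}=V$, with $V\ne K$ because $v$ has nontrivial center. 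Taking this $B'$ and $\frq'$ gives the statement.
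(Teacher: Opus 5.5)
Your proof is correct, but it takes a different route from the paper. The paper cites Huneke--Swanson to get that $V$ is a DVR essentially of finite type over $R$, and writes $V=C_{\frq}$ with $C=R[b_1,\dots,b_r]$. It then descends to $R_0$ by setting $B=R_0[b_1,\dots,b_r]$, so that $W^{-1}B=C$ and $B_{\frq\cap B}=C_{\frq}=V$, and reads off height one from $\dim V=1$.

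You instead reprove the cited fact in this setting; this is essentially Zariski's argument behind that theorem. Your steps are:
\begin{enumerate}
\item adjoin lifts of a transcendence basis of $\kk_v$ directly to $R_0$, so no descent from $R$ is needed;
\item use the dimension formula to force $\height(\fm_v\cap B)\ls 1$;
\item pass to the normalization, which is finite because $B$ is finitely generated over $\kk$, to get a DVR dominated by $V$, hence equal to $V$.
\end{enumerate}

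Your route is self-contained and shows where the hypotheses enter: finiteness of normalization is exactly where \emph{finite type over a field} is used, and you derive that $V$ is a DVR rather than importing it. The paper's route buys brevity at the cost of a citation.

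Three small points of precision:
\begin{itemize}
\item The dimension inequality $\height(\frq)+\trdeg_{\kk(\frp_0)}\kk(\frq)\ls\height(\frp_0)+\trdeg_{R_0}B$, valid over any Noetherian domain, already gives $\height(\frq)\ls 1$. Universal catenarity is therefore not needed.
\item $\height(\frq')=1$ comes from incomparability (giving $\ls 1$) together with $\frq'\neq 0$, since it contracts to $\frq\ne 0$. Going-up plays no role.
\item The final identification is cleanest stated as follows: the only rings between a DVR and its fraction field are the DVR itself and the field. Alternatively, valuation rings are maximal under domination.
\end{itemize}
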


\begin{proof}
	Let $\fm_V$ be the maximal ideal of $V$. 
 	By   \cite[Theorem~9.3.2 and Proposition~6.6.2]{HunekeSwanson} $V$ is a DVR and essentially of finite type over $R$. Thus,  there is a finitely generated $R$-subalgebra
	$
	C=R[b_1,\dots,b_r]\subseteq V
	$ 
	such that  $C_{\frq}=V$, where $\frq=\mathfrak m_V\cap C$.
	
	Set
	$
	B=R_0[b_1,\dots,b_r]\subseteq \Quot(R)$ so that $W^{-1}B=C$. Let 
	$
	 \frp=\frq\cap B.
	$
	Since every element of $W$ is a unit of $R$, one has $\frp\cap W=\varnothing$. Therefore, 
	$ \frq = \frp C
	$ 
	 and 
	$
	B_{\frp}= C_{\frq}=V
	$. 
	As $V$ is one-dimensional, we  have
	$
	\height(\frp)=1
	$, finishing the proof. 
\end{proof}

		
	\section{Proof of \autoref{thm:symbolic-main}}	
	
	By \autoref{cor:symbolic-saturation}, there exists  $h\in R$ such that
	$
	I^{(n)}=I^n:h^\infty
	$ 
	for every $n\ge 1$. 
	Write
	\[
	R=W^{-1}R_0,
	\qquad R_0=A/H,
	\qquad A=\kk[x_1,\dots,x_t],
	\]
	where $\kk$ is a field,  $R_0$ is a finitely generated $\kk$-algebra,  and $W\subseteq R_0$ is a multiplicatively closed set. Choose generators of $I$ and clear their denominators to assume $I=JR$ for some  ideal
     $
	J=(g_1,\dots,g_r)\subseteq R_0.
	$ Write $h=a/w$ with $a\in R_0$ and $w\in W$. Since $w$ is a unit in $R$ and saturations commute with  localization,  we have  	
	\begin{equation}\label{eq:saturationsR}
		I^{(n)}=I^n:_Ra^\infty= (J^n:_{R_0}a^\infty)R\quad \text{for every }n\ge 0.	
	\end{equation}

	Choose lifts $\tilde{g}_1,\dots,\tilde{g}_r,\tilde a\in A$ of $g_1,\dots,g_r,a$. Let
     $
	\widetilde J=(\tilde{g}_1,\dots,\tilde{g}_r)\subseteq A
	$ so that $J^n = (\widetilde J^n+H)R_0$. 
	By \autoref{lem:saturation-elimination}, if $z$ is a new variable and
	\[
	K_n=(\widetilde{J}^n+H,1-z\tilde a)\subseteq A[z],
	\]
	then
	$
	K_n\cap A=(\widetilde J^n+H):\tilde a^\infty
	$. Thus 
		\begin{equation}\label{eq:saturationsA}
		(K_n\cap A)R_0=J^n:_{R_0}a^\infty \quad \text{for every }n\ge 0.	
	\end{equation}

	If $\delta$ is the maximum total degree among the elements $\tilde g_j$, then $\widetilde J^n$ is generated by elements of total degree at most $n\delta$. Moreover, the generators of $H$ and the equation $1-z\tilde a$ are fixed. It follows that  there is a constant $c$ such that $K_n$ is generated by elements of total degree at most $cn$ for every $n\gs 1$. By \autoref{lem:effective-elimination} one has $\mu(K_n\cap A)=O(n^b)$ for some $b\in \ZZ_{>0}$. Thus, from \autoref{eq:saturationsA} and  \autoref{eq:saturationsR} we obtain $\mu(I^{(n)})=O(n^b)$, finishing the proof.
	\qed
	
	
	\section{Proof of \autoref{thm:div-main}}	
	Write $I_n
	=
	\bigcap_{i=1}^s I(v_i)_{n\lambda_i}$ with  $v_1,\ldots,v_s$ divisorial valuations of $R$ and $\lambda_1,\ldots,\lambda_s$  positive real numbers. 
 Let $V_i$ be the valuation ring of $v_i$, and normalize   $v_i$ and $\lambda_i$ to assume, without loss of generality, that $v_i=\ord_{V_i}$. Set 
	$m_i(n)=\lceil n\lambda_i\rceil$. 
	Then
    \[
	I(v_i)_{n\lambda_i}=\{f\in R\mid v_i(f)\ge m_i(n)\}.
	\]
	
	Write
	\[
	R=W^{-1}R_0,
	\qquad R_0=A/H,
	\qquad A=\kk[x_1,\dots,x_t],
	\]
	where $\kk$ is a field,  $R_0$ is a  domain that is finitely generated over $\kk$,  and $W\subseteq R_0$ is a multiplicatively closed set. By \autoref{lem:affine-dvr-model}, for each $i$ there exists a domain
	$
	R_0\subseteq B_i\subseteq \Quot(R)
	$ 
	finitely generated as an $R_0$-algebra,  and a height-one prime $\frp_i\subset B_i$ such that
	$
	(B_i)_{\frp_i}=V_i
	$. Therefore, for every $m\in \ZZ_{\gs 0}$ one has
	\begin{equation*}
	\{f\in R_0\mid v_i(f)\ge m\}
	=R_0\cap \frp_i^m(B_i)_{\frp_i}
	=R_0\cap \frp_i^{(m)}.
	\end{equation*}
	Since every $w\in W$ is a unit in  $R\subseteq V_i$, we have  $v_i(w)=0$, and so localization at $W$ does not change the values of $v_i$. Thus,  if
	$
	J_{i,m}=R_0\cap \frp_i^{(m)},
    $
	then
	\[
	J_{i,m}R=\{f\in R\mid v_i(f)\ge m\}. 
	\]
	Hence, if
	$
	J_n=\bigcap_{i=1}^sJ_{i,m_i(n)}
	$, then 
    $
	I_n=J_nR
	$.
	
	By \autoref{lem:uniform-saturation}, for each $i$ there exists an element
	$
	h_i\in B_i\setminus \frp_i
	$
	such that
	$
	\frp_i^{(m)}=\frp_i^m:h_i^\infty
	$
	for every $m\gs 1$. 
	Choose a presentation
	\[
	A_i=A[\mathbf y_i]=A[y_{i,1},\dots,y_{i,r_i}]\twoheadrightarrow B_i
	\]
	with kernel $H_i$. Choose lifts
	$p_{i,1},\dots,p_{i,a_i}\in A_i$
	of a set of generators of $\frp_i$, and a lift $\tilde h_i\in A_i$ of $h_i$. Let
	$
	Q_i=(p_{i,1},\dots,p_{i,a_i})\subseteq A_i.
	$
	For each $i$ introduce a new variable $u_i$, and set
	\[
	K_{i,n}
	=\bigl(Q_i^{m_i(n)}+H_i,\,1-u_i\tilde h_i\bigr)
	\subseteq A_i[u_i]
	\]
	for every $n\ge 1$. By \autoref{lem:saturation-elimination}, we have 
    $
	K_{i,n}\cap A_i=\bigl(Q_i^{m_i(n)}+H_i\bigr):\tilde h_i^\infty
	$. Thus
	\begin{equation}\label{eq:saturationsAi}
		(K_{i,n}\cap A_i)B_i = \frp_i^{m_i(n)}:h_i^\infty = \frp_i^{(m_i(n))},
	\end{equation}
	and so 
	$
	K_{i,n}\cap A
	=\pi^{-1}\bigl(R_0\cap \frp_i^{(m_i(n))}\bigr)
	=\pi^{-1}(J_{i,m_i(n)})
	$, 
	where $\pi:A\twoheadrightarrow R_0$.
	
	Let $z_1,\ldots, z_s$ be new variables, and  gather all the variables into the polynomial ring
	\[
	T=A[\mathbf y_1,\dots,\mathbf y_s,u_1,\dots,u_s,z_1,\dots,z_s]. 
	\]
	Extend each $K_{i,n}$ to $T$ and define
	\[
	D_n
	=\sum_{i=1}^sz_iK_{i,n}T
	+\Bigl(1-\sum_{i=1}^sz_i\Bigr)T.
	\]
	\autoref{lem:intersection-elimination} gives
	\begin{equation}\label{eq:Dn}
	D_n\cap A
	=\bigcap_{i=1}^s(K_{i,n}\cap A)
	=\pi^{-1}(J_n).
	\end{equation}
	
	The powers $Q_i^{m_i(n)}$ have  generating sets consisting of products of $m_i(n)$ of the  polynomials $p_{i,j}$. Moreover, the generators of the $H_i$ and  the equations $1-u_i\tilde h_i$   are fixed. Since
	$
	m_i(n)=\lceil n\lambda_i\rceil=O(n)
    $, we obtain that each ideal $K_{i,n}$ has a generating set whose maximum total degree is $O(n)$.   It follows that  there is a constant $c$ such that $D_n$ is generated by elements of total degree at most $cn$ for every $n\gs 1$. 
	By \autoref{lem:effective-elimination} and \autoref{eq:Dn}, $\mu(\pi^{-1}(J_n))=O(n^b)$ for some $b\in \ZZ_{>0}$. The result now follows from the equalities $I_n=J_nR=(\pi^{-1}(J_n))R$.
	\qed
		
		\bibliographystyle{plain}

		\bibliography{References}
		
		
	\end{document}